 \newtheorem{thm}{Theorem}[section]
 \newtheorem{cor}[thm]{Corollary}
 \newtheorem{lem}[thm]{Lemma}
 \theoremstyle{definition}
 \theoremstyle{remark}
 \numberwithin{equation}{section}
 \newcommand{\tr}{\textbf{tr}}
 \newcommand{\di}{\textbf{div}}
 \newcommand{\ric}{\textbf{Rc}}
 \newcommand{\Rm}{\textbf{Rm}}
\begin{document}

\title[Differential Harnack inequalities]{Differential Harnack inequalities for linear parabolic equations}

\author{Paul W.Y. Lee}
\email{wylee@math.cuhk.edu.hk}
\address{Room 216, Lady Shaw Building, The Chinese University of Hong Kong, Shatin, Hong Kong}

\date{\today}

\maketitle

\begin{abstract}
We prove matrix and scalar differential Harnack inequalities for linear parabolic equations on Riemannian and K\"ahler manifolds. 
\end{abstract}


\section{Introduction}

Harnack inequalities are one of the most important estimates in the theory of elliptic and parabolic partial differential equations. In the linear parabolic case, a version of such estimate was done in \cite{Mo} in connection with the regularity of solutions. For the heat equation, a sharp version of the estimate was proved in \cite{LiYa}. The proof is done by proving a gradient estimate, the so-called differential Harnack inequality. The sharp Harnack inequality is obtained by integrating this gradient estimate along geodesics. Since then, numerous generalisations of the differential Harnack inequality to different parabolic equations were found (see, for instance, \cite{An, BaLe, Ch1, Ha4}). 

The differential Harnack inequalities mentioned above are scalar inequalities. A matrix analogue for the heat equation was proved in \cite{Ha1}.  A version for the Ricci flow was found in \cite{Ha2} (see \cite{Ha3} for an application and see also \cite{Br, Ca, CaNi, Ni1} for further developments). 

In this paper, we obtain the following matrix version of differential Harnack inequality for general linear parabolic equations on compact Riemannian manifolds. 

\begin{thm}\label{main}
Let $\rho_t$ be a positive solution of the equation
\[
\dot\rho_t=\Delta \rho_t+\left<\nabla \rho_t,X\right>+U\rho_t
\]
on a compact Riemannian manifold. Let $\mathcal A$ be the operator defined by $\mathcal A=\nabla X-(\nabla X)^*$, where $(\nabla X)^*$ is the adjoint of the linear map $v\mapsto \nabla X(v)$. Let $W=\di X+\frac{1}{2}|X|^2-2U$. 
Assume that 
\begin{enumerate}
\item $M$ has non-negative sectional curvature, 
\item $\nabla\ric=0$, 
\item $\nabla\mathcal A=0$, 
\item $\frac{1}{4}\mathcal A^2+\nabla^2W\leq k^2 I$. 
\end{enumerate}
Then 
\[
\nabla^2\log\rho_t+\frac{1}{4}(\nabla X+(\nabla X)^*)\geq -\frac{k\coth(kt)}{2}I. 
\]
\end{thm}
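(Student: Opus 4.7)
The plan is to apply Hamilton's tensor maximum principle to the symmetric $(0,2)$-tensor
\[
P_{ij} \,:=\, \nabla_i\nabla_j f \,-\, \tfrac{1}{2}\,S_{ij}, \qquad f \,:=\, -\log\rho_t, \quad S_{ij} \,:=\, \tfrac{1}{2}(\nabla_i X_j + \nabla_j X_i),
\]
and prove the equivalent statement $P \leq \phi(t)\, g$ with barrier $\phi(t) := \tfrac{1}{2}k\coth(kt)$, which satisfies the Riccati ODE $\dot\phi = -2\phi^2 + \tfrac{k^2}{2}$ and blows up as $t \to 0^+$. The blow-up takes care of the initial time; all the content is in propagation.

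Step one is immediate: substituting $\rho_t = e^{-f}$ into the PDE gives
\[
\partial_t f \,=\, \Delta f \,-\, |\nabla f|^2 \,+\, \langle X, \nabla f\rangle \,-\, U.
\]
Step two is the computational heart: I apply $\nabla^2$ to this equation and re-commute covariant derivatives. Three ingredients streamline the result: (i) the Bochner-type identity $\nabla_i\nabla_j\Delta f = \Delta f_{ij} - 2R_{iklj}f^{kl} + R_{ik}f^k{}_j + R_{jk}f^k{}_i$, in which the Bianchi-type term involving $\nabla\ric$ vanishes thanks to hypothesis (2); (ii) the Ricci identity $\nabla_i f_{jk} - \nabla_k f_{ij} = -R_{ikj}{}^l f_l$, used to rewrite $\nabla^2|\nabla f|^2 = 2 f_i{}^k f_{jk} + 2\nabla_{\nabla f} f_{ij} - 2R_{ikjl}f^k f^l$; (iii) the splitting $\nabla X = S + \tfrac{1}{2}\mathcal A$ together with $\nabla\mathcal A = 0$, which collapses the antisymmetric-derivative pieces arising from $\nabla_i\nabla_j\langle X,\nabla f\rangle$. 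Expanding $\nabla^2 U$ via $U = \tfrac{1}{2}(\di X + \tfrac{1}{2}|X|^2 - W)$, the scalar lower-order terms should regroup into precisely $\tfrac{1}{4}\mathcal A^2 + \nabla^2 W$, yielding a Riccati-type evolution of the schematic form
\[
\bigl(\partial_t - \Delta + 2\nabla_{\nabla f} - \nabla_X\bigr)P_{ij} \,=\, -2 P_i{}^k P_{kj} \,+\, \tfrac{1}{2}\bigl(\tfrac{1}{4}\mathcal A^2 + \nabla^2 W\bigr)_{ij} \,+\, \mathcal R_{ij},
\]
where $\mathcal R$ is a curvature remainder whose contraction with any unit vector is $\leq 0$ under non-negative sectional curvature.

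Step three is the tensor maximum principle. Suppose the bound $P \leq \phi\, g$ fails for the first time at $(\tau, x_0)$ along a unit eigenvector $v$ of $P(\tau, x_0)$ of eigenvalue $\phi(\tau)$. Extending $v$ by parallel translation near $x_0$, the scalar function $\langle P v, v\rangle$ attains its space-time maximum at $(\tau, x_0)$, so the drift-Laplacian contribution in the evolution equation is $\leq 0$ there. Since $v$ is a top eigenvector, $\langle P^2 v, v\rangle = \phi(\tau)^2$; by hypothesis (4), $\tfrac{1}{2}\langle(\tfrac{1}{4}\mathcal A^2 + \nabla^2 W)v,v\rangle \leq \tfrac{k^2}{2}$; and by hypothesis (1), $\mathcal R(v,v) \leq 0$. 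Assembling,
\[
\partial_t \langle Pv,v\rangle\bigr|_{(\tau, x_0)} \,\leq\, -2\phi(\tau)^2 + \tfrac{k^2}{2} \,=\, \dot\phi(\tau),
\]
contradicting the maximality of $\tau$.

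The main obstacle is the evolution computation in step two. Expanding $\nabla_i\nabla_j\langle X,\nabla f\rangle$ and $\nabla_i\nabla_j U$ produces a proliferation of terms mixing $\nabla^2 X$, $(\nabla X)(\nabla f)$, and cross-products with $\nabla^2 f$, and only after using $\nabla\mathcal A = 0$ to convert antisymmetric-derivative terms into parallel $\mathcal A$-factors, and symmetrizing carefully, do these recombine with $\nabla^2 W$ to produce exactly $\tfrac{1}{4}\mathcal A^2 + \nabla^2 W$ with the coefficient $\tfrac{1}{2}$ that the comparison with $\dot\phi$ demands. A secondary subtlety is verifying the sign of the curvature remainder $\mathcal R$: the Bochner-type term $-2R_{iklj}f^{kl}$ couples the Riemann tensor to the Hessian of $f$, and must be absorbed against the sectional-curvature contraction $-2R_{ikjl}f^kf^l$ coming from $\nabla^2|\nabla f|^2$, exploiting that $v$ is a top eigenvector of $P$.
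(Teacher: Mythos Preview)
Your plan is sound and will yield the theorem, but it is organized quite differently from the paper's argument. The paper does \emph{not} compute a parabolic evolution $(\partial_t-\Delta-\nabla_{Y})P$ and then invoke Hamilton's tensor maximum principle. Instead it works in a Lagrangian picture: it introduces the vector field $Y_t=-2\nabla\log\rho_t-X$ (so that $(\nabla Y_t)^{Sy}=2P$), follows the flow $\varphi_t$ of $Y_t$, and along each integral curve builds ``parallel adapted frames'' in which the skew part $(\nabla Y_t)^{Sk}=-\tfrac12\mathcal A$ is isolated. A general matrix Bochner identity for flows (Lemma~2.1) then gives a Riccati-type ODE for $S^{Sy}(t)$ along the curve, with a residual term $E(t)=\nabla^2\di(Y_t)$; the elliptic structure enters only at this point, through a separate computation of $\Delta\langle\nabla_vY_t,v\rangle$ at the spatial maximum. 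Your Eulerian computation packages these two steps into a single heat-type evolution for $P$; the paper's route instead separates the flow-Riccati mechanism (valid for any time-dependent vector field) from the specific PDE, and the adapted frames make the appearance of $\tfrac14\mathcal A^2$ automatic rather than the outcome of a long symmetrization. Either way the curvature--Hessian coupling that needs the top-eigenvector trick is with $\mathcal M_t=\nabla Y_t+(\nabla Y_t)^*=4P$, not with $\nabla^2 f$ alone; in your framework this means the $R_{iklj}f^{kl}$ term from the Bochner commutator must be combined with the $(\nabla X)\!\cdot\!\nabla^2 f$ cross-terms coming out of $\nabla_i\nabla_j\langle X,\nabla f\rangle$ (and with $\tfrac12\Delta S$) before one contracts against the top eigenvector of $P$ --- so your ``secondary subtlety'' paragraph slightly misidentifies what absorbs what, though the conclusion is correct.
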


If $X$ is a gradient vector field, then we recover the following result in \cite{Le1}. 

\begin{cor}
Let $\rho_t$ be a positive solution of the equation
\[
\dot\rho_t=\Delta \rho_t+\left<\nabla \rho_t,\nabla f\right>+U\rho_t
\]
on a compact Riemannian manifold. Assume that 
\begin{enumerate}
\item $M$ has non-negative sectional curvature, 
\item $\nabla\ric=0$, 
\item $\nabla^2W\leq k^2 I$. 
\end{enumerate}
Then 
\[
\nabla^2\left(\log\rho_t+\frac{1}{2}f\right)\geq -\frac{k\coth(kt)}{2}I. 
\]
\end{cor}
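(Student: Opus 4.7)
The plan is to derive the corollary as a direct specialization of Theorem \ref{main} to the case $X=\nabla f$. The key observation is that when $X$ is a gradient vector field, the linear map $v\mapsto \nabla_v X = \nabla^2 f(v,\cdot)^\sharp$ is given by the Hessian of $f$, which is self-adjoint. Hence $(\nabla X)^*=\nabla X$, and consequently
\[
\mathcal A=\nabla X-(\nabla X)^*=0.
\]
With $\mathcal A$ identically zero, hypothesis (3) of Theorem \ref{main} ($\nabla\mathcal A=0$) is trivially satisfied, and the term $\frac14\mathcal A^2$ in hypothesis (4) vanishes, so that hypothesis reduces to $\nabla^2 W\le k^2 I$, which is precisely hypothesis (3) of the corollary. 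The remaining hypotheses (non-negative sectional curvature and $\nabla\ric=0$) are identical in the two statements.

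Next I would rewrite the conclusion. Since $\nabla X+(\nabla X)^*=2\nabla^2 f$, the lower bound in Theorem \ref{main} becomes
\[
\nabla^2\log\rho_t+\tfrac{1}{2}\nabla^2 f\;\ge\;-\tfrac{k\coth(kt)}{2}I.
\]
Using that the Hessian is a linear operator on functions, the left-hand side equals $\nabla^2\bigl(\log\rho_t+\tfrac12 f\bigr)$, which yields the desired inequality.

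There is essentially no obstacle here: the corollary is a direct substitution $X=\nabla f$ into Theorem \ref{main}, combined with the self-adjointness of $\nabla^2 f$. The only point that deserves a brief verification is the identification $(\nabla X)^*=\nabla X$ for $X=\nabla f$, which follows from the symmetry of the Hessian $\langle\nabla_v\nabla f,w\rangle=\nabla^2 f(v,w)=\nabla^2 f(w,v)=\langle\nabla_w\nabla f,v\rangle$; once this is noted, the rest is bookkeeping.
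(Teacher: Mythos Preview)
Your proposal is correct and matches the paper's approach exactly: the paper simply states that setting $X=\nabla f$ in Theorem \ref{main} yields the corollary, and you have supplied the routine verification (self-adjointness of $\nabla^2 f$ forces $\mathcal A=0$, collapsing hypotheses (3)--(4) and the conclusion to those of the corollary).
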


By setting $f\equiv 0$, $U\equiv 0$, and letting $k\to 0$, we recover the following result in \cite{Ha1}. 

\begin{cor}
Let $\rho_t$ be a positive solution of the equation
\[
\dot\rho_t=\Delta \rho_t
\]
on a compact Riemannian manifold. Assume that 
\begin{enumerate}
\item $M$ has non-negative sectional curvature, 
\item $\nabla\ric=0$. 
\end{enumerate}
Then 
\[
\nabla^2\log\rho_t\geq -\frac{1}{2t}I. 
\]
\end{cor}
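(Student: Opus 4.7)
The plan is to obtain this corollary as a direct specialization of the preceding corollary (the gradient-vector-field case of Theorem~\ref{main}), rather than re-running the Bochner/maximum-principle argument from scratch. Since the preceding corollary is stated as already proved in \cite{Le1}, I may use it as a black box; the task then reduces to verifying that all of its hypotheses survive the specialization $f\equiv 0$, $U\equiv 0$, and that its conclusion degenerates to the claimed bound in the limit $k\to 0$.

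First I would set $X=\nabla f$ with $f\equiv 0$ and $U\equiv 0$, so that the parabolic equation in the preceding corollary collapses to the ordinary heat equation $\dot\rho_t=\Delta\rho_t$. The quantity $W=\di X+\tfrac12|X|^2-2U$ then equals $\Delta f+\tfrac12|\nabla f|^2-2U=0$ identically, so $\nabla^2 W\equiv 0$, and hypothesis (3) of the preceding corollary ($\nabla^2 W\leq k^2 I$) is satisfied for every $k>0$. Hypotheses (1) and (2) (non-negative sectional curvature and $\nabla\ric=0$) are precisely the hypotheses given here, so all assumptions of the preceding corollary hold for every $k>0$. Its conclusion therefore reads
\[
\nabla^2\!\log\rho_t \;\geq\; -\frac{k\coth(kt)}{2}\,I
\]
for every $k>0$ and every $t>0$.

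The last step is the limit $k\to 0^{+}$. For each fixed $t>0$ one has the Laurent expansion $k\coth(kt)=\tfrac{1}{t}+O(k^2 t)$, so $\tfrac12 k\coth(kt)\to \tfrac{1}{2t}$. Since the inequality holds pointwise (as an inequality of symmetric bilinear forms) for every $k>0$ and the right-hand side is independent of the solution, one may pass to the limit on the right to conclude
\[
\nabla^2\!\log\rho_t \;\geq\; -\frac{1}{2t}\,I,
\]
which is the desired bound.

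The only potential subtlety — and what I would regard as the main thing to check — is the legitimacy of letting $k\to 0$ in an inequality that was derived under the strict assumption $k>0$. This is in fact harmless: the bound above is pointwise in $(t,x)$, the left-hand side is $k$-independent, and the function $k\mapsto \tfrac12 k\coth(kt)$ is continuous and decreasing to $\tfrac{1}{2t}$ as $k\to 0^+$. So the infimum over $k>0$ of the right-hand side is attained in the limit and gives the stated estimate. No new analytic input is required; the whole content of the corollary is packaged inside Theorem~\ref{main}.
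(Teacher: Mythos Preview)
Your proposal is correct and follows exactly the approach indicated in the paper: the corollary is obtained from the preceding gradient-field corollary by setting $f\equiv 0$, $U\equiv 0$ (so that $W\equiv 0$ and the hypothesis $\nabla^2 W\le k^2 I$ holds for every $k>0$) and then letting $k\to 0^+$ using $k\coth(kt)\to 1/t$. No additional argument is needed.
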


We also consider the case when $M$ is a K\"ahler manifold equipped with a complex structure $\mathcal J$. 

\begin{thm}\label{main2}
Let $\rho_t$ be a positive solution of the equation
\[
\dot\rho_t=\Delta \rho_t+\left<\nabla \rho_t,X\right>+U\rho_t
\]
on a compact K\"ahler manifold. Assume that 
\begin{enumerate}
\item $M$ has non-negative bisectional curvature, 
\item $\mathcal J^*\mathcal A\mathcal J=\mathcal A$, 
\item $\nabla\mathcal A(X,X)+\nabla\mathcal A(\mathcal JX,\mathcal JX)=0$ for all tangent vector $X$, 
\item $\frac{1}{4}\mathcal A^2+\frac{1}{2}(\nabla^2W+\mathcal J^*\nabla^2W \mathcal J)\leq k^2I$. 
\end{enumerate}
Then 
\[
\begin{split}
&\nabla^2\log\rho_t+\mathcal J^*\nabla^2\log\rho_t\mathcal J+\frac{1}{4}(\nabla X+(\nabla X)^*)\\
&+\frac{1}{4}\mathcal J^*(\nabla X+(\nabla X)^*)\mathcal J\geq -k\coth(kt)I. 
\end{split}
\]
\end{thm}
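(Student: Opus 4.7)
The plan is to adapt the proof strategy of Theorem \ref{main} to the K\"ahler setting. Set $h_t := \log \rho_t$; the PDE for $\rho_t$ then gives
\[
\dot h_t = \Delta h_t + |\nabla h_t|^2 + \left<\nabla h_t, X\right> + U.
\]
Introduce the symmetric two-tensor
\[
\mathfrak S_t := \nabla^2 h_t + \mathcal J^* \nabla^2 h_t\, \mathcal J + \tfrac14\bigl(\nabla X + (\nabla X)^*\bigr) + \tfrac14\, \mathcal J^*\bigl(\nabla X + (\nabla X)^*\bigr)\mathcal J,
\]
so that the conclusion of the theorem is precisely $\mathfrak S_t \geq -k\coth(kt)\, I$. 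I would prove this via a Hamilton-type tensor maximum principle, which reduces the problem to establishing a parabolic differential inequality for $\mathfrak S_t$ and then comparing with a scalar ODE.

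The core calculation is to differentiate $\nabla^2 h_t$ and $\mathcal J^*\nabla^2 h_t\, \mathcal J$ in time and commute $\nabla^2$ through $\Delta$. For $\nabla^2 h_t$ this yields a standard Bochner-type identity; the analogous computation for $\mathcal J^*\nabla^2 h_t\, \mathcal J$ is available because $\mathcal J$ is parallel on a K\"ahler manifold, so $\mathcal J$-conjugation commutes with $\nabla$ and $\Delta$. The decisive K\"ahler observation is that the symmetrization $A \mapsto A + \mathcal J^* A\, \mathcal J$ converts contractions with the Ricci tensor (and with its first derivatives, through the K\"ahler-Bianchi identities) into contractions with the bisectional curvature; assumption (1) then supplies the non-negativity needed to discard these curvature terms, removing the need for the $\nabla\ric = 0$ hypothesis used in Theorem \ref{main}. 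The $\nabla X$ contributions are organized through $\mathcal A = \nabla X - (\nabla X)^*$: hypothesis (2) makes $\mathcal A$ stable under $\mathcal J$-conjugation so the quadratic $\mathcal A^2$ terms add coherently, and hypothesis (3) forces the $\nabla \mathcal A$ gradient terms to cancel once both $\mathcal J$-symmetric copies are combined.

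After these cancellations the inequality should take the schematic form
\[
(\tim - \Delta - \nabla_X - 2\nabla_{\nabla h_t})\, \mathfrak S_t \geq \mathfrak S_t^2 - \Bigl(\tfrac14\, \mathcal A^2 + \tfrac12\, \nabla^2 W + \tfrac12\, \mathcal J^*\nabla^2 W\, \mathcal J\Bigr),
\]
and the bracketed inhomogeneous term is bounded by $k^2 I$ via assumption (4). Comparing pointwise eigenvalues with the ODE $\phi' = \phi^2 - k^2$, whose minimal solution $\phi(t) = -k\coth(kt)$ blows up as $t \to 0^+$ and thus serves as the initial barrier, yields $\mathfrak S_t \geq -k\coth(kt)\, I$. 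The main obstacle will be the curvature bookkeeping in the Bochner step: on a K\"ahler manifold the raw Ricci contributions produced by $[\Delta, \nabla^2]$ do not individually have a favourable sign, and only after the $\mathcal J$-symmetrization do they regroup (together with the $\nabla \ric$ terms normally handled by an explicit assumption) into bisectional curvature contractions controlled by assumption (1); aligning all coefficients so that the final barrier is $-k\coth(kt)$ rather than $-\tfrac12 k\coth(kt)$ requires careful tracking of the factor-of-two doubling produced by the $\mathcal J$-symmetrization.
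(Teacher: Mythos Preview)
Your plan is essentially correct and follows the same strategy as the paper, just cast in the standard tensor maximum principle framework rather than the paper's equivalent flow-and-frame formulation (the paper tracks the largest eigenvalue of $-2\mathfrak S_t$ along integral curves of $Y_t=-2\nabla\log\rho_t-X$ using parallel adapted frames, which is the same maximum principle in characteristic form). The structural ingredients you identify---$\mathcal J$-symmetrization to convert the full-curvature and $\nabla\ric$ contributions into bisectional curvature terms, hypothesis (2) to align the $\mathcal A^2$ pieces, and hypothesis (3) to eliminate the first-order $\nabla\mathcal A$ terms---are precisely those the paper uses.

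Two points to watch. First, the quadratic produced by the Bochner step is not literally $\mathfrak S_t^2$ but $\tfrac18\bigl(\mathcal M_t^2+\mathcal J^*\mathcal M_t^2\mathcal J\bigr)$ with $\mathcal M_t=\nabla Y_t+(\nabla Y_t)^*$; since $\mathfrak S_t=-\tfrac14(\mathcal M_t+\mathcal J^*\mathcal M_t\mathcal J)$ one has $\mathfrak S_t^2\le\tfrac18(\mathcal M_t^2+\mathcal J^*\mathcal M_t^2\mathcal J)$, so your schematic inequality and the barrier $-k\coth(kt)$ are indeed correct. The paper handles this same step via the linearization $|S^{Sy}V|^2+|S^{Sy}JV|^2\ge 2a\lambda-2a^2$ with $a(t)=k\coth(kt)$, which is equivalent. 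Second, the step in which the $\nabla\ric$-type terms disappear after $\mathcal J$-symmetrization rests on a specific K\"ahler trace identity,
\[
\tr\bigl(\nabla\Rm(X,X,\cdot)\cdot+\nabla\Rm(\mathcal JX,\mathcal JX,\cdot)\cdot\bigr)=\tr\bigl(\nabla\Rm(\cdot,\cdot,X)X+\nabla\Rm(\cdot,\cdot,\mathcal JX)\mathcal JX\bigr),
\]
which the paper isolates as a separate lemma; your phrase ``K\"ahler--Bianchi identities'' points in the right direction, but this identity is the non-obvious core of the curvature bookkeeping and deserves explicit attention.
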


The conditions and conclusions are simplified significantly when $X$ is a gradient vector field. 

\begin{cor}\label{cor2,1}
Let $\rho_t$ be a positive solution of the equation
\[
\dot\rho_t=\Delta \rho_t+\left<\nabla \rho_t,\nabla f\right>+U\rho_t
\]
on a compact K\"ahler manifold $M$. Assume that 
\begin{enumerate}
\item $M$ has non-negative bisectional curvature, 
\item $\frac{1}{2}(\nabla^2W+\mathcal J^*\nabla^2W \mathcal J)\leq k^2I$. 
\end{enumerate}
Then 
\[
\begin{split}
&\nabla^2\left(\log\rho_t+\frac{1}{2}f\right)+\mathcal J^*\nabla^2\left(\log\rho_t+\frac{1}{2}f\right)\mathcal J\geq -k\coth(kt)I. 
\end{split}
\]
\end{cor}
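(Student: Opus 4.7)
The plan is to derive this corollary as a direct specialization of Theorem \ref{main2} to the case in which $X=\nabla f$ is a gradient vector field. The key observation is that when $X=\nabla f$, the endomorphism $\nabla X$ coincides with the Hessian $\nabla^2 f$, which is self-adjoint. Consequently the skew-symmetric operator $\mathcal{A}=\nabla X-(\nabla X)^{*}$ vanishes identically.

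First I would check the four hypotheses of Theorem \ref{main2}. Hypothesis (1) (non-negative bisectional curvature) is assumed in the corollary. Hypotheses (2) and (3), which are the $\mathcal{J}$-invariance of $\mathcal{A}$ and the symmetrization identity $\nabla\mathcal{A}(X,X)+\nabla\mathcal{A}(\mathcal{J}X,\mathcal{J}X)=0$, are trivial because $\mathcal{A}\equiv 0$. Hypothesis (4) contributes only its Hessian part, since $\frac{1}{4}\mathcal{A}^{2}=0$, and therefore reduces exactly to the bound $\frac{1}{2}(\nabla^{2}W+\mathcal{J}^{*}\nabla^{2}W\,\mathcal{J})\leq k^{2}I$ assumed in the corollary.

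Second, I would unwind the conclusion of Theorem \ref{main2}. Since $\nabla X+(\nabla X)^{*}=2\nabla^{2}f$, the symmetric part of $X$ contributes
\[
\frac{1}{4}(\nabla X+(\nabla X)^{*})+\frac{1}{4}\mathcal{J}^{*}(\nabla X+(\nabla X)^{*})\mathcal{J}=\nabla^{2}\!\left(\tfrac{1}{2}f\right)+\mathcal{J}^{*}\nabla^{2}\!\left(\tfrac{1}{2}f\right)\mathcal{J}.
\]
Adding this to the terms in $\log\rho_{t}$ on the left-hand side of the conclusion of Theorem \ref{main2} and using linearity of both $\nabla^{2}$ and of $A\mapsto\mathcal{J}^{*}A\,\mathcal{J}$, I would collapse the expression into
\[
\nabla^{2}\!\left(\log\rho_{t}+\tfrac{1}{2}f\right)+\mathcal{J}^{*}\nabla^{2}\!\left(\log\rho_{t}+\tfrac{1}{2}f\right)\mathcal{J}\geq -k\coth(kt)I,
\]
which is precisely the inequality claimed.

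I do not anticipate a real obstacle here: the corollary is a formal reduction of Theorem \ref{main2}, driven entirely by the symmetry of the Hessian $\nabla^{2}f$. The only bookkeeping is to keep track of the factor $\tfrac{1}{2}$ arising from $\nabla X+(\nabla X)^{*}=2\nabla^{2}f$ when folding the gradient drift into $\log\rho_{t}$.
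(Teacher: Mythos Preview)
Your proposal is correct and matches the paper's approach: the corollary is stated immediately after Theorem \ref{main2} as the specialization to $X=\nabla f$, with the simplification driven entirely by $\mathcal A=\nabla^2 f-(\nabla^2 f)^*=0$. The verification of the hypotheses and the rewriting of the conclusion via $\nabla X+(\nabla X)^*=2\nabla^2 f$ are exactly as you describe.
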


By setting $f\equiv 0$, $U\equiv 0$, and letting $k\to 0$, we recover the following result in \cite{CaNi}.

\begin{cor}\label{cor2,2}
Let $\rho_t$ be a positive solution of the equation
\[
\dot\rho_t=\Delta \rho_t
\]
on a compact K\"ahler manifold $M$. Assume that $M$ has non-negative bisectional curvature. Then 
\[
\begin{split}
&\nabla^2\log\rho_t+\mathcal J^*\nabla^2\log\rho_t\mathcal J\geq -\frac{1}{t}I. 
\end{split}
\]
\end{cor}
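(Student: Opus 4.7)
The statement is presented as a limiting case of Corollary \ref{cor2,1}, and indeed the paragraph preceding it tells us the recipe: set $f\equiv 0$, $U\equiv 0$, and let $k\to 0$. I would follow this recipe literally.

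First I specialize Corollary \ref{cor2,1} by taking $f\equiv 0$ and $U\equiv 0$. The drift term then vanishes, so the equation reduces to the heat equation $\dot\rho_t=\Delta\rho_t$ as required. With this choice the auxiliary quantity becomes
\[
W=\di(\nabla f)+\tfrac{1}{2}|\nabla f|^2-2U=0
\]
identically, whence $\nabla^2 W=0$, and hypothesis (2) of Corollary \ref{cor2,1} is trivially satisfied for every $k>0$. The non-negative bisectional curvature assumption is exactly hypothesis (1) of Corollary \ref{cor2,1}. Applying that corollary therefore yields, for each $k>0$,
\[
\nabla^2\log\rho_t+\mathcal J^*\nabla^2\log\rho_t\mathcal J\geq -k\coth(kt)\,I.
\]

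Since the left-hand side is independent of $k$, I would then let $k\to 0^+$ on the right. From $\sinh(kt)=kt+O(k^3)$ and $\cosh(kt)=1+O(k^2)$ one obtains the expansion $k\coth(kt)=\frac{1}{t}+O(k^2 t)$, which converges to $\frac{1}{t}$. The symmetric operator inequality passes to the limit pointwise, because the right-hand side is a scalar multiple of the identity that converges in $\Real$, yielding
\[
\nabla^2\log\rho_t+\mathcal J^*\nabla^2\log\rho_t\mathcal J\geq -\tfrac{1}{t}I
\]
as claimed.

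There is no genuine obstacle once Corollary \ref{cor2,1} is in hand; the argument is a one-line specialization together with the elementary small-argument expansion of $k\coth(kt)$. The only step meriting any comment at all is the passage to the limit, which is harmless here since the right-hand side is scalar.
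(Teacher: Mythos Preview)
Your proposal is correct and follows exactly the approach the paper indicates: the paper itself gives no separate proof of this corollary beyond the sentence ``By setting $f\equiv 0$, $U\equiv 0$, and letting $k\to 0$,'' and you have simply filled in those routine details. The only point worth noting is that the limit $k\coth(kt)\to 1/t$ is indeed elementary, so nothing is missing.
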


Note that the proof of Theorem \ref{main2} is slightly different from that of \cite{CaNi}. In particular, the proof here does not make use of any holomorphic coordinates. 

There are also scalar versions of the above inequalities with much weaker assumptions. 

\begin{thm}\label{mainscalar}
Let $\rho_t$ be a positive solution of the equation
\[
\dot\rho_t=\Delta \rho_t+\left<\nabla \rho_t,X\right>+U\rho_t
\]
on a compact Riemannian manifold. Assume that 
\begin{enumerate}
\item $\ric(X,X)\geq -K|X|^2$ for some constant $K$, 
\item $\frac{1}{4\lambda}|\di\mathcal A|^2-\frac{1}{4}|\mathcal A|^2+\Delta W\leq k_1$ for some positive constants $\lambda$ and $k_1$,
\item $W\geq -k_2$ for some constant $k_2$. 
\end{enumerate}
Then 
\[
a(t)\left(\Delta\log\rho_t+\frac{1}{2}\di X\right)-b(t)\left|\nabla\log\rho_t+\frac{1}{2}X\right|^2+\frac{b(t)}{2}W\geq -\frac{c(t)}{2}
\]
where 
\[
\begin{split}
&a(t)= \sinh^2(\chi t)+(K+\lambda)\left(\frac{\sinh(\chi t)\cosh(\chi t)}{\chi}-t\right),\\
&b(t)=-(K+\lambda)\left(\frac{\sinh(\chi t)\cosh(\chi t)}{\chi}-t\right),\\
&c(t)=n\sinh(\chi t)\left((K+\lambda)\sinh(\chi t)+\chi\cosh(\chi t)\right)\\
&\chi=\sqrt{(K+\lambda)^2+\frac{k_1+2(K+\lambda)k_2}{n}}. 
\end{split}
\]
\end{thm}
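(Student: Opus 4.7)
The plan is the classical Li-Yau maximum-principle argument on the compact manifold $M$, with a carefully tuned Harnack quantity. First, linearize by setting $\phi := \log\rho_t$, so that $\dot\phi = \Delta\phi + |\nabla\phi|^2 + \langle\nabla\phi,X\rangle + U$. Introduce the shifted momentum $Y := \nabla\phi + \tfrac{1}{2}X$ together with $h := \di Y = \Delta\phi + \tfrac{1}{2}\di X$ and $g := |Y|^2$; since $W = \di X + \tfrac{1}{2}|X|^2 - 2U$, a direct check collapses the evolution to $\dot\phi = h + g - \tfrac{1}{2}W$. The quantity to show nonnegative is $F(t) := a(t)\, h - b(t)\, g + \tfrac{1}{2}b(t) W + \tfrac{1}{2}c(t)$, with $a,b,c$ as in the theorem; since $a(0)=b(0)=c(0)=0$, we have $F \equiv 0$ at $t=0$.

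Next I would compute $(\partial_t - \Delta)F$. Differentiating $h$ directly gives $(\partial_t - \Delta)h = \Delta g - \tfrac{1}{2}\Delta W$. For $g$, starting from $\partial_t g = 2\langle Y,\nabla\dot\phi\rangle$ and combining the Bochner identity for $\nabla\phi$ with the Weitzenbock identity applied to the vector field $X$, one obtains an expression of the form $(\partial_t - \Delta)g = 2\langle Y,\nabla h + \nabla g - \tfrac{1}{2}\nabla W\rangle - 2|\nabla Y|^2 - 2\ric(Y,Y) + \mathcal E(X,\mathcal A)$, where $\mathcal E$ collects the curvature and commutator contributions containing $\di\mathcal A$. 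Each remaining term is then bounded using the hypotheses: $|\nabla Y|^2 \geq \tfrac{1}{n}h^2$ by Cauchy-Schwarz on the symmetric part of $\nabla Y$; $\ric(Y,Y) \geq -K|Y|^2$ from hypothesis (1) after expanding $Y$ and polarizing; the cross-term $\langle Y,\di\mathcal A\rangle$ is handled by Young's inequality $2\langle Y,\di\mathcal A\rangle \leq \lambda|Y|^2 + \tfrac{1}{\lambda}|\di\mathcal A|^2$, after which hypothesis (2) absorbs the combination $\tfrac{1}{4\lambda}|\di\mathcal A|^2 - \tfrac{1}{4}|\mathcal A|^2 + \Delta W \leq k_1$; and hypothesis (3) disposes of the remaining $bW$ term via $W \geq -k_2$.

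Assembling these estimates, one obtains a differential inequality of the form $(\partial_t - \Delta + \langle V,\nabla\cdot\rangle)F \geq \Phi(g,h;a,b,c,a',b',c')$, where $V$ is a transport field built from $Y$ and $\Phi$ is a quadratic form in $g, h$ whose coefficients are linear in $a,b,c$ and their derivatives. Choosing $a,b,c$ so that $\Phi$ is pointwise nonnegative --- equivalently, so that the discriminant of the quadratic form vanishes --- reduces to a coupled linear ODE system whose characteristic exponent is precisely $\chi = \sqrt{(K+\lambda)^2 + (k_1 + 2(K+\lambda)k_2)/n}$. Integrating under the initial conditions $a(0)=b(0)=c(0)=0$ recovers the stated expressions, and the parabolic maximum principle on $\barM$ then forces $F \geq 0$ throughout.

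The main obstacle I anticipate is the careful bookkeeping of the antisymmetric piece $\mathcal A = \nabla X - (\nabla X)^*$, which is invisible both to $|\nabla Y|^2$ and to $\ric$ but enters through the commutator $[\nabla_i,\nabla_j] X^k$ in the Weitzenbock step, producing the $\di\mathcal A$ term that only hypothesis (2) controls. The parameter $\lambda$ is present precisely so that Young's inequality can trade $|\di\mathcal A|^2$ against $|\mathcal A|^2$, at the cost of shifting $K \mapsto K+\lambda$ inside $\chi$; the optimal $\lambda$ is the one that makes the quadratic form $\Phi$ degenerate and thereby produces the sharpest bound the method can give.
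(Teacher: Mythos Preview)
Your proposal is correct and follows essentially the same strategy as the paper: form a Harnack quantity with time-dependent coefficients $a,b$ in front of $\di Y$ and $|Y|^2$, derive a parabolic differential inequality via a Bochner-type identity, absorb the $\langle \di\mathcal A, Y\rangle$ cross term with Young's inequality (introducing the parameter $\lambda$), choose $a,b,c$ so the residual quadratic form is nonnegative, and conclude by the maximum principle.

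The only notable packaging difference is that the paper works with $Y_t=-2\nabla\log\rho_t-X$ and differentiates the Harnack quantity along the flow $\varphi_t$ of $Y_t$, invoking the trace of the matrix Bochner identity (\ref{Bochner}) established earlier; you instead compute $(\partial_t-\Delta)F$ directly with a drift term. These are equivalent, since $\frac{d}{dt}g_t(\varphi_t)=\dot g_t+\langle\nabla g_t,Y_t\rangle$. One small correction to your commentary: the antisymmetric piece $\mathcal A$ is \emph{not} invisible to $|\nabla Y|^2$; indeed $(\nabla Y)^{Sk}=\tfrac14\mathcal A$, and it is precisely the $|(\nabla Y)^{Sk}|^2$ contribution (rather than a commutator term in your $\mathcal E$) that supplies the $-\tfrac14|\mathcal A|^2$ needed to invoke hypothesis~(2). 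With that bookkeeping fixed, your outline matches the paper's proof.
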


If $X$ is a gradient vector field, then $\mathcal A=0$ and we can also set $\lambda =0$. Therefore, we obtain the following result in \cite{Le1}. 

\begin{cor}
Let $\rho_t$ be a positive solution of the equation
\[
\dot\rho_t=\Delta \rho_t+\left<\nabla \rho_t,\nabla f\right>+U\rho_t
\]
on a compact Riemannian manifold. Assume that 
\begin{enumerate}
\item $\ric \geq 0$ for some constant $K$, 
\item $\Delta W\leq k_1$ for some positive constants $\lambda$ and $k_1$. 
\end{enumerate}
Then 
\[
a(t)\left(\Delta\log\rho_t+\frac{1}{2}\Delta f\right)\geq -\frac{c(t)}{2}
\]
where 
\[
\begin{split}
&a(t)= \sinh^2\left(\sqrt{\frac{k_1}{n}} t\right),\\
&c(t)=\sqrt{nk_1}\sinh\left(\sqrt{\frac{k_1}{n}} t\right)\cosh\left(\sqrt{\frac{k_1}{n}} t\right). 
\end{split}
\]
\end{cor}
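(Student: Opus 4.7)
The plan is to deduce this corollary as a direct specialization of Theorem \ref{mainscalar} to the case $X=\nabla f$, with parameter choices $K=0$ and $\lambda=0$, noting that the hypothesis $W\geq -k_2$ from the parent theorem is automatic on compact $M$.

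First I would observe that when $X=\nabla f$ is a gradient vector field, the Hessian $\nabla^2 f$ is self-adjoint, so $\mathcal A=\nabla X-(\nabla X)^*\equiv 0$ and in particular $\di\mathcal A\equiv 0$. Consequently the combination $\frac{1}{4\lambda}|\di\mathcal A|^2-\frac{1}{4}|\mathcal A|^2$ appearing in hypothesis (2) of Theorem \ref{mainscalar} vanishes identically for every $\lambda>0$, and that hypothesis reduces to $\Delta W\leq k_1$, exactly the corollary's assumption (2). Hypothesis (1) of the theorem, $\ric(X,X)\geq -K|X|^2$, is implied by $\ric\geq 0$ upon taking $K=0$. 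Hypothesis (3), $W\geq -k_2$, holds for some $k_2$ simply by continuity of $W$ and compactness of $M$, so it needs no separate assumption in the corollary.

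Next I would pass $\lambda\to 0^+$ in the theorem's conclusion and track each of $\chi$, $a(t)$, $b(t)$, $c(t)$. Every $k_2$ in those formulas is multiplied by the factor $K+\lambda=\lambda$ and hence disappears in the limit, while $K+\lambda\to 0$ kills the auxiliary corrections in $a$, $b$, $c$. One obtains
\[
\chi\to\sqrt{k_1/n},\qquad a(t)\to\sinh^2\!\bigl(\sqrt{k_1/n}\,t\bigr),\qquad b(t)\to 0,
\]
\[
c(t)\to n\sqrt{k_1/n}\,\sinh\!\bigl(\sqrt{k_1/n}\,t\bigr)\cosh\!\bigl(\sqrt{k_1/n}\,t\bigr)=\sqrt{nk_1}\,\sinh\!\bigl(\sqrt{k_1/n}\,t\bigr)\cosh\!\bigl(\sqrt{k_1/n}\,t\bigr),
\]
which are precisely the expressions in the corollary. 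Since $b(t)\to 0$, the middle term $-b(t)|\nabla\log\rho_t+\tfrac12 X|^2+\tfrac{b(t)}{2}W$ in the theorem's inequality drops out, and since $\di X=\di\nabla f=\Delta f$, the conclusion collapses exactly to the stated inequality $a(t)(\Delta\log\rho_t+\tfrac12\Delta f)\geq -\tfrac{c(t)}{2}$.

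The only subtlety I anticipate is justifying the $\lambda\to 0^+$ passage in a hypothesis of the form $\frac{1}{4\lambda}|\di\mathcal A|^2-\cdots\leq k_1$, where the prefactor blows up. Because $\di\mathcal A\equiv 0$ in the gradient case, the problematic term is identically zero and hypothesis (2) holds uniformly in $\lambda$; equivalently one may apply Theorem \ref{mainscalar} for each $\lambda>0$ with the same $k_1$ and then take the limit of the (continuous) conclusion. This removes the only potential difficulty, and the corollary follows.
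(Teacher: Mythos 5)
Your proposal is correct and takes essentially the same route as the paper, which disposes of this corollary with the single remark that $X=\nabla f$ forces $\mathcal A=0$ and that one may then set $K=0$ and $\lambda=0$ in Theorem \ref{mainscalar}. Your more careful version --- checking that $\di\mathcal A\equiv 0$ neutralizes the $\tfrac{1}{4\lambda}$ term, that compactness supplies $k_2$ (which then vanishes from $\chi$ because it is multiplied by $K+\lambda$), and that $a,b,c,\chi$ converge to the stated expressions as $\lambda\to 0^{+}$ --- is exactly the intended argument, spelled out.
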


On the other hand, if we set $X=0$, $U=0$ in Theorem \ref{mainscalar}, then we recover the following result in \cite{LiXu}. 

\begin{cor}
Let $\rho_t$ be a positive solution of the equation
\[
\dot\rho_t=\Delta \rho_t
\]
on a compact Riemannian manifold. Assume that 
\begin{enumerate}
\item $\ric(X,X) \geq -K|X|^2$ for some positive constant $K$. 
\end{enumerate}
Then 
\[
a(t)\Delta\log\rho_t-b(t)\left|\nabla\log\rho_t\right|^2\geq -\frac{c(t)}{2}
\]
where 
\[
\begin{split}
&a(t)=e^{Kt} \sinh(K t)-Kt,\\
&b(t)=-\sinh(K t)\cosh(K t)+Kt,\\
&c(t)=nKe^{Kt}\sinh(K t). 
\end{split}
\]
\end{cor}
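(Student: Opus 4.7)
The plan is to obtain this estimate as a direct specialization of Theorem \ref{mainscalar} with $X\equiv 0$ and $U\equiv 0$, in the limit $\lambda\to 0^+$. Setting $X=0$ and $U=0$ forces $\mathcal A=\nabla X-(\nabla X)^*=0$ and $W=\di X+\tfrac12|X|^2-2U=0$, so that $|\mathcal A|^2$, $|\di\mathcal A|^2$, and $\Delta W$ all vanish identically. Hypothesis (2) of Theorem \ref{mainscalar} then holds trivially with $k_1=0$ for \emph{any} $\lambda>0$, hypothesis (3) holds with $k_2=0$, and hypothesis (1) coincides with the assumption of the corollary.

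With these choices $\chi=\sqrt{(K+\lambda)^2}=K+\lambda$, both sides of the estimate depend continuously on $\lambda$, and the term $\tfrac{b(t)}{2}W$ drops out since $W\equiv 0$. Sending $\lambda\to 0^+$ therefore reduces the conclusion of Theorem \ref{mainscalar} to
\[
a(t)\,\Delta\log\rho_t-b(t)\,|\nabla\log\rho_t|^2\geq -\tfrac{c(t)}{2},
\]
where $a,b,c$ are now the expressions from Theorem \ref{mainscalar} evaluated at $\lambda=0$, $\chi=K$, $k_1=k_2=0$.

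The remaining step is a purely algebraic check that the coefficients collapse to the stated form. Using the identity $\sinh^2(Kt)+\sinh(Kt)\cosh(Kt)=\sinh(Kt)\bigl(\sinh(Kt)+\cosh(Kt)\bigr)=e^{Kt}\sinh(Kt)$, I would rewrite
\[
a(t)=\sinh^2(Kt)+K\!\left(\frac{\sinh(Kt)\cosh(Kt)}{K}-t\right)=e^{Kt}\sinh(Kt)-Kt,
\]
read off $b(t)=-\sinh(Kt)\cosh(Kt)+Kt$ directly from the formula in Theorem \ref{mainscalar}, and similarly factor
\[
c(t)=n\sinh(Kt)\bigl(K\sinh(Kt)+K\cosh(Kt)\bigr)=nKe^{Kt}\sinh(Kt).
\]
There is no substantive obstacle: once Theorem \ref{mainscalar} is in hand, this corollary is an entirely routine bookkeeping consequence obtained by letting $\lambda\to 0^+$.
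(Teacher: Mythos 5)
Your proposal is correct and is exactly the paper's route: the corollary is obtained by specializing Theorem \ref{mainscalar} to $X\equiv 0$, $U\equiv 0$ (so $\mathcal A=0$, $W=0$, $k_1=k_2=0$, $\chi=K$) and simplifying the coefficients via $\sinh^2(Kt)+\sinh(Kt)\cosh(Kt)=e^{Kt}\sinh(Kt)$. Your extra care with the $\lambda\to 0^+$ limit is a harmless refinement of the paper's remark that one may simply take $\lambda=0$ once $\mathcal A=0$.
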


By expanding the above inequality in $K$, we recover the following result in \cite{LiYa}. 

\begin{cor}
Let $\rho_t$ be a positive solution of the equation
\[
\dot\rho_t=\Delta \rho_t
\]
on a compact Riemannian manifold with non-negative Ricci curvature. Then 
\[
\begin{split}
&\Delta\log\rho_t\geq -\frac{n}{2t}. 
\end{split}
\]
\end{cor}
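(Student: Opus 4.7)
The plan is to derive this estimate by passing to the $K\to 0^+$ limit in the Li--Xu inequality of the preceding corollary. Since non-negative Ricci curvature trivially implies $\ric(X,X)\geq -K|X|^2$ for every $K\geq 0$, I can apply that corollary with any $K>0$ and then let $K$ decrease to zero.

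The core step is to Taylor expand the three coefficients about $K=0$. Using $\sinh(Kt)\cosh(Kt)=\tfrac{1}{2}\sinh(2Kt)$ together with the standard series for $e^{Kt}$ and $\sinh(Kt)$, I will obtain
\[
a(t)=K^2t^2+O(K^3),\qquad b(t)=-\tfrac{2}{3}K^3t^3+O(K^5),\qquad c(t)=nK^2t+O(K^3).
\]
The crucial structural observation is that $b(t)$ vanishes to order $K^3$, strictly higher than the order $K^2$ at which $a(t)$ and $c(t)$ vanish. Dividing the Li--Xu inequality through by $K^2$ and sending $K\to 0^+$ therefore annihilates the gradient term and leaves
\[
t^2\,\Delta\log\rho_t\geq -\frac{nt}{2},
\]
which is the claimed bound after dividing by $t^2$.

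The only part requiring any care is the order-counting in the Taylor expansions; had $b(t)$ happened to be of order $K^2$, a nontrivial multiple of $\abs{\nabla\log\rho_t}^2$ would survive in the limit and the conclusion would change form. Once the expansions are verified, the rest is trivial arithmetic, so I do not expect any genuine obstacle.
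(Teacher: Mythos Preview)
Your proposal is correct and follows exactly the route indicated in the paper, which simply says to expand the Li--Xu inequality in $K$. Your Taylor expansions are accurate---in particular $b(t)=-\tfrac{2}{3}K^3t^3+O(K^5)$ vanishes to one order higher than $a(t)$ and $c(t)$---so after dividing by $K^2$ and letting $K\to 0^+$ (using that $|\nabla\log\rho_t|^2$ is a fixed finite quantity at each point) the gradient term drops out and the claimed bound follows.
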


\smallskip

\section*{Acknowledgements}

The work started from a discussion with Professor Craig Evans on the paper \cite{Le1} during Fall 2013 when the author was in residence at the Mathematical Science Research Institute in Berkeley, California. The author would like to thank him for his advices and encouragement.

\smallskip

\section{On linearizations of flows}

In this section, we discuss some preliminary results concerning a general flow $\varphi_t$ of a time-dependent vector field $Y_t$ on a Riemannian manifold $M$. 

Let $x$ be a point on the manifold $M$ and let $\{v_1(0),...,v_n(0)\}$ be an orthonormal frame in the tangent space $T_xM$. Let $w_i(t)$ be the parallel transport of $v_i(0)$ along the curve $t\mapsto\varphi_t(x)$ and let $\tilde K(t)$ be the skew symmetric matrix defined by 
\[
\tilde K_{ij}(t)=\frac{1}{2}\left(\left<\nabla Y_t(w_i(t)),w_j(t)\right>-\left<\nabla Y_t(w_j(t)),w_i(t)\right>\right). 
\]
Then there exists a 1-parameter family of orthogonal matrices $O(t)$ such that 
\[
\dot O(t)=O(t)\tilde K(t). 
\]
Finally, we define $v_i(t)=\sum_{j=1}^nO_{ij}(t)w_j(t)$ and we call the resulting orthonormal frames $\{v_1(t),...,v_n(t)\}$ parallel adapted frames. Note that $v_i$ satisfies 
\begin{equation}\label{vdot}
\frac{D}{dt} v_i(t)=\sum_{j=1}^nS^{Sk}_{ij}(t)v_j(t),
\end{equation}
where $\frac{D}{dt}$ denotes the covariant derivative, $S(t)$ is the matrix defined by 
\[
S_{ij}(t)=\left<\nabla Y_t(v_i(t)),v_j(t)\right>_{\varphi_t}
\]
and $S^{Sk}(t)$ is the skew symmetric part of $S(t)$. 

We remark that similar moving frames adapted to different geometric situations were used in the earlier works \cite{Le1, Le2, Le3} which, in turn, is motivated by a discussion in \cite{Vi1}. 

For a given matrix $A(t)$, we will denote the symmetric and the skew-symmetric parts of $A(t)$ by $A^{Sy}(t)$ and $A^{Sk}(t)$, respectively. Let $B(t)$ and $R(t)$ be the matrices defined by 
\[
\begin{split}
&R_{ij}(t)=\left<\Rm(v_i(t),Y_t)Y_t,v_j(t)\right>_{\varphi_t},\\
&B_{ij}(t)=\left<\nabla(\dot Y_t+\nabla_{Y_t}Y_t)(v_i(t)),v_j(t)\right>_{\varphi_t}. 
\end{split}
\]

\begin{lem}\label{Bochnerlem}
The matrix $S^{Sy}(t)$ satisfies 
\begin{equation}\label{Bochner}
\begin{split}
&\dot S^{Sy}(t)+S^{Sy}(t)^2+S^{Sy}(t)S^{Sk}(t)\\
&-S(t)^{Sk}(t)S^{Sy}(t)+S^{Sk}(t)^2-B^{Sy}(t)+R(t)=0. 
\end{split}
\end{equation}
\end{lem}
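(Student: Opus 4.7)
My plan is to differentiate the matrix $S(t)$ directly along the integral curve $\gamma(t) = \varphi_t(x)$ and then extract the symmetric part at the very end. Writing the defining relation $S_{ij}(t) = \langle (\nabla Y_t)(v_i(t)), v_j(t)\rangle_{\gamma(t)}$ and applying the product rule with respect to $\frac{D}{dt}$, I get
\[
\dot S_{ij} = \left\langle \frac{D}{dt}\bigl[(\nabla Y_t)(v_i)\bigr], v_j\right\rangle + \left\langle (\nabla Y_t)(v_i), \frac{D}{dt}v_j\right\rangle.
\]
Using (\ref{vdot}) and the skew-symmetry of $S^{Sk}$, the second summand contributes $-(SS^{Sk})_{ij}$. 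Treating $\nabla Y_t$ as a time-dependent $(1,1)$-tensor along $\gamma$, the first summand splits into the $t$-derivative piece $\langle \nabla\dot Y_t(v_i),v_j\rangle$, the spatial piece $\langle (\nabla_{Y_t}\nabla Y_t)(v_i),v_j\rangle$, and a third piece which — again by (\ref{vdot}) — gives $(S^{Sk}S)_{ij}$.

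The main algebraic step is to rewrite the spatial piece. Viewing it as the Hessian $(\nabla^2 Y_t)(Y_t,v_i)$ and invoking the standard Ricci identity
\[
(\nabla^2 Y_t)(Y_t,v_i) - (\nabla^2 Y_t)(v_i,Y_t) = \Rm(Y_t,v_i)Y_t,
\]
I expand the second Hessian as $\nabla_{v_i}\nabla_{Y_t}Y_t - \nabla_{\nabla_{v_i}Y_t}Y_t$. Since $\nabla_{v_i}Y_t = \sum_k S_{ik}v_k$, the inner product of $\nabla_{\nabla_{v_i}Y_t}Y_t$ with $v_j$ equals $(S^2)_{ij}$. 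Combining $\nabla_{v_i}\nabla_{Y_t}Y_t$ with the $t$-derivative piece $\nabla_{v_i}\dot Y_t$ rebuilds exactly the matrix $B$ from its definition. The sign of the remaining curvature contribution matches $-R_{ij}$ after using $\Rm(Y_t,v_i) = -\Rm(v_i,Y_t)$. Putting everything together, I obtain the unsymmetrized identity
\[
\dot S = B - R - S^2 + S^{Sk}S - SS^{Sk}.
\]

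To finish, I decompose $S = S^{Sy} + S^{Sk}$ and take symmetric parts. Using that squares of symmetric and of skew matrices are symmetric, and that the symmetric part of the product of a symmetric and a skew-symmetric matrix equals half their commutator, one checks
\[
\bigl[-S^2 + S^{Sk}S - SS^{Sk}\bigr]^{Sy} = -(S^{Sy})^2 - (S^{Sk})^2 - S^{Sy}S^{Sk} + S^{Sk}S^{Sy}.
\]
Since $R$ is symmetric by the pair-swap symmetry $\langle \Rm(X,Y)Z,W\rangle = \langle \Rm(Z,W)X,Y\rangle$, taking symmetric parts of the identity above immediately produces (\ref{Bochner}).

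The step I expect to be the most delicate is the cancellation of the $t$-derivative $\nabla\dot Y_t(v_i)$ against the corresponding piece inside $B_{ij}$, together with getting the sign of the curvature term to agree with the convention $R_{ij} = \langle \Rm(v_i,Y_t)Y_t, v_j\rangle$; everything else is a routine expansion once the Ricci commutation formula is applied in the right direction.
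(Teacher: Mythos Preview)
Your argument is correct and follows essentially the same route as the paper: differentiate $S_{ij}(t)$ along $\varphi_t(x)$ using the frame evolution (\ref{vdot}), apply the Ricci identity to swap the arguments of $\nabla^2 Y_t$, absorb the resulting $\nabla_{v_i}(\dot Y_t+\nabla_{Y_t}Y_t)$ into $B$, and then take the symmetric part. The only cosmetic difference is that the paper simplifies $S^{Sk}S - SS^{Sk}$ to $S^{Sk}S^{Sy} - S^{Sy}S^{Sk}$ before squaring, whereas you defer that cancellation to the final symmetric-part step; the algebra matches either way.
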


\smallskip

Note that when $Y_t=\nabla f$, we obtain the usual Bochner formula by taking the trace of (\ref{Bochner}). 

\begin{proof}
We first compute $\dot S(t)$:
\[
\begin{split}
&\dot S_{ij}(t)=\left<\nabla\dot Y_t(v_i(t))+\nabla^2Y_t(Y_t(\varphi_t),v_i(t)),v_j(t)\right>\\
&+\sum_{k=1}^nS_{ik}^{Sk}(t)\left<\nabla Y_t(v_k(t)),v_j(t)\right>+\sum_{k=1}^nS_{jk}^{Sk}(t)\left<\nabla Y_t(v_i(t)),v_k(t)\right>\\
&=\left<\nabla\dot Y_t(v_i(t)),v_j(t)\right>+\left<\nabla^2Y_t(v_i(t),Y_t(\varphi_t)),v_j(t)\right>\\
&+\left<\Rm(Y_t,v_i(t))Y_t,v_j(t)\right>+\sum_{k=1}^n(S^{Sk}_{ik}(t)S^{Sy}_{kj}(t)-S^{Sy}_{ik}(t)S^{Sk}_{kj}(t))\\
&=\left<\nabla(\dot Y_t+\nabla_{Y_t} Y_t)(v_i(t)),v_j(t)\right>-\nabla Y_t(\nabla Y_t(v_i(t)))\\
&+\left<\Rm(Y_t,v_i(t))Y_t,v_j(t)\right>+\sum_{k=1}^n(S^{Sk}_{ik}(t)S^{Sy}_{kj}(t)-S^{Sy}_{ik}(t)S^{Sk}_{kj}(t)).
\end{split}
\]

Therefore, 
\[
\begin{split}
\dot S(t)&=B(t)-S(t)^2-R(t)+S^{Sk}(t)S^{Sy}(t)-S^{Sy}(t)S^{Sk}(t)\\
&=B(t)-S^{Sy}(t)^2-S^{Sk}(t)^2-R(t)-2S^{Sy}(t)S^{Sk}(t). 
\end{split}
\]

By taking the symmetric part, the result follows. 
\end{proof}

\smallskip

\section{Proof of Theorem \ref{main}}

In this section, we give the proof of Theorem \ref{main}. We first give a proof for the case when the manifold $M$ is compact. 

Let $\rho_t$ be a solution of $\dot\rho_t=\Delta \rho_t+\left<\nabla \rho_t,X\right>+U\rho_t$ and let $Y_t=-2\nabla\log\rho_t-X$. A computation shows that 
\[
\begin{split}
\frac{d}{dt}\log\rho_t=\Delta\log \rho_t+\left|\nabla \log\rho_t+\frac{1}{2}X\right|^2-\frac{1}{4}|X|^2+U,
\end{split}
\]
and 
\[
\begin{split}
\dot Y_t&=\nabla\Delta(-2\log \rho_t)-\nabla\left(\frac{1}{2}\left|-2\nabla \log\rho_t-X\right|^2\right)+\nabla\left(\frac{1}{2}|X|^2-2U\right)\\
&=\nabla\di(Y_t)-\nabla\left(\frac{1}{2}\left|Y_t\right|^2\right)+\nabla\left(\di(X)+\frac{1}{2}|X|^2-2U\right)\\
&=\nabla\di(Y_t)-(\nabla Y_t)^*(Y_t)+\nabla\left(\di(X)+\frac{1}{2}|X|^2-2U\right)
\end{split}
\]
Here $(\nabla Y_t)^*$ denotes the adjoint of the linear map $v\mapsto\nabla Y_t(v)$. 

Therefore, we have 
\begin{equation}\label{Y}
\begin{split}
&\dot Y_t+\nabla_{Y_t}Y_t=\nabla\di(Y_t)-\mathcal A(Y_t)+\nabla W,
\end{split}
\end{equation}
where $W=\di X+\frac{1}{2}|X|^2-2U$ and $\mathcal A=\nabla X-(\nabla X)^*$. 

Hence, if $D(t)$ , $E(t)$, $F(t)$ are the matrices defined by 
\[
D_{ij}(t)=\left<\nabla\mathcal A(v_i(t),v_j(t)),Y_t\right>, 
\]
\[
E_{ij}(t)=\left<\nabla^2\di(Y_t)(v_i(t)),v_j(t)\right>,
\]
\[
F_{ij}(t)=\left<\nabla^2 W(v_i(t)),v_j(t)\right>,
\]
respectively, then 
\[
\begin{split}
&B_{ij}(t)=\left<\nabla_{v_i(t)}(\dot Y_t+\nabla_{Y_t}Y_t),v_j(t)\right>\\
&=D_{ij}(t)+E_{ij}(t)+F_{ij}(t)-\left<\mathcal A(\nabla_{v_i(t)} Y_t),v_j(t)\right>\\
&=D_{ij}(t)+E_{ij}(t)+F_{ij}(t)+2\sum_{k=1}^nS_{ik}(t)S_{kj}^{Sk}(t). 
\end{split}
\]

This gives 
\[
B(t)=D(t)+E(t)+F(t)+2S^{Sy}(t)S^{Sk}(t)+2(S^{Sk}(t))^2
\]
and so
\[
\begin{split}
&B^{Sy}(t)=D(t)+E(t)+F(t)+S^{Sy}(t)S^{Sk}(t)-S^{Sk}(t)S^{Sy}(t)+2(S^{Sk}(t))^2. 
\end{split}
\]

By combining this with (\ref{Bochner}), we get 
\begin{equation}\label{Sy1}
\begin{split}
&\dot S^{Sy}(t)+S^{Sy}(t)^2-S^{Sk}(t)^2-D(t)-E(t)-F(t)+R(t)=0. 
\end{split}
\end{equation}

Next, we consider the term $E(t)$. 

Let us fix a time $t$ and let $v$ be a unit tangent vector which achieves the supremum $\lambda(t):=\sup_{\{v\in TM||v|=1\}}\left<\nabla_v Y_t,v\right>$. Assume that $v$ are contained in the tangent space $T_{\varphi_t(x)}M$. We extend $v$ and $v_i(t)$ to vector fields, still denoted by same symbols, defined in a neighborhood of $\varphi_t(x)$ by parallel translation along geodesics. It follows from the Ricci identities that the followings hold at $\varphi_t(x)$:
\[
\begin{split}
&\Delta \left<\nabla_vY_t,v\right>=\sum_{i=1}^n \left<\nabla^3Y_t(v_i(t),v_i(t),v),v\right>\\
&=\sum_{i=1}^n\Big(\left<\nabla^3Y_t(v_i(t),v,v_i(t)),v\right>\\
&-\left<\nabla Y_t(v_i(t)),\Rm(v_i(t),v)v\right>-\left<Y_t,\nabla\Rm(v_i(t),v_i(t),v)v)\right>\Big)\\
&=\sum_{i=1}^n\Big(\left<\nabla^3Y_t(v,v_i(t),v_i(t)),v\right>-\left<\nabla Y_t(\Rm(v_i(t),v)v_i(t)),v\right>\\
&-\left<\nabla Y_t(\Rm(v_i(t),v)v),v_i(t)\right>-\left<\nabla Y_t(v_i(t)),\Rm(v_i(t),v)v\right>\\
&-\left<Y_t,\nabla\Rm(v_i(t),v_i(t),v)v)\right>\Big)\\
\end{split}
\]
\[
\begin{split}
&=\sum_{i=1}^n\Big(\left<\nabla^3Y_t(v,v_i(t),v),v_i(t)\right>-\left<\nabla^2\mathcal A(v,v_i(t),v_i(t)),v\right>\\
&-\left<\nabla Y_t(\Rm(v_i(t),v)v_i(t)),v\right>-\left<\nabla Y_t(\Rm(v_i(t),v)v),v_i(t)\right>\\
&-\left<\nabla Y_t(v_i(t)),\Rm(v_i(t),v)v\right>-\left<Y_t,\nabla\Rm(v_i(t),v_i(t),v)v)\right>\Big) \\
&=\sum_{i=1}^n\Big(\left<\nabla^3Y_t(v,v,v_i(t)),v_i(t)\right>-\left<\nabla^2\mathcal A(v,v_i(t),v_i(t)),v\right>\\
&-\left<\nabla Y_t(v),\Rm(v_i(t),v)v_i(t)\right>-\left<Y_t,\nabla\Rm(v,v_i(t),v)v_i(t))\right>\\
&-\left<\nabla Y_t(\Rm(v_i(t),v)v_i(t)),v\right>-\left<\nabla Y_t(\Rm(v_i(t),v)v),v_i(t)\right>\\
&-\left<\nabla Y_t(v_i(t)),\Rm(v_i(t),v)v\right>-\left<Y_t,\nabla\Rm(v_i(t),v_i(t),v)v)\right>\Big).  
\end{split}
\]

Therefore, if $\mathcal M_t=\nabla Y_t+(\nabla Y_t)^*$, then 
\begin{equation}\label{M}
\begin{split}
&\Delta \left<\nabla_vY_t,v\right>=\sum_{i=1}^n\Big(\left<\nabla^3Y_t(v,v,v_i(t)),v_i(t)\right>\\
&-\left<\mathcal M_t(v),\Rm(v_i(t),v)v_i(t)\right>-\left<Y_t,\nabla\Rm(v,v_i(t),v)v_i(t))\right>\\
&-\left<\mathcal M_t(v_i(t)),\Rm(v_i(t),v)v\right>-\left<Y_t,\nabla\Rm(v_i(t),v_i(t),v)v)\right>\\
&-\left<\nabla^2\mathcal A(v,v_i(t),v_i(t)),v\right>\Big). 
\end{split}
\end{equation}

The parallel adapted frame can be chosen such that $\{v_1(t),...,v_n(t)\}$ form a basis of eigenvectors for the operator $(\nabla Y_t+(\nabla Y_t)^*)(\varphi_t(x))$ with eigenvalues $\lambda_1,...,\lambda_n$, respectively. We can also assume that $\lambda_1\leq ...\leq\lambda_n$. Note that $\lambda_n=\lambda(t)$. Once again, we extend $v_i(t)$ to a vector field defined in a neighborhood of $\varphi_t(x)$ as above. It follows that 
\[
\begin{split}
&0\geq\left<\nabla^2\di(Y_t)(v),v\right>\\
&-\sum_{i=1}^n\Big(\left<\nabla^2\mathcal A(v,v_i(t),v_i(t)),v\right>-(\lambda-\lambda_i)\left<\Rm(v,v_i(t))v_i(t),v\right>\\
&-\left<Y_t,\nabla\Rm(v,v,v_i(t))v_i(t))\right>+\left<Y_t,\nabla\Rm(v_i(t),v_i(t),v)v)\right>\Big). 
\end{split}
\]

By the Bianchi identity, the above equation becomes 
\[
\begin{split}
&0\geq \left<\nabla^2\di(Y_t)(v),v\right>+2\nabla\ric(v,v,Y_t)-\nabla\ric(Y_t,v,v)\\
&+\sum_{i=1}^n\Big(-\left<\nabla^2\mathcal A(v,v_i(t),v_i(t)),v\right>+(\lambda-\lambda_i)\left<\Rm(v,v_i(t))v_i(t),v\right>\Big). 
\end{split}
\]

If we assume that the sectional curvature is non-negative and that the Ricci tensor and the operator $\mathcal A$ are parallel, then $\left<\nabla^2\di(Y_t)(v),v\right>\leq 0$. Therefore, (\ref{Sy1}) becomes 
\[
\begin{split}
&\dot \lambda(t)+\lambda(t)^2-\left<S^{Sk}(t)^2V,V\right>-\left<F(t)V,V\right>\\
&=\left<\dot S^{Sy}(t)V,V\right>+\left<(S^{Sy}(t)^2-S^{Sk}(t)^2)V,V\right>-\left<F(t)V,V\right>\\
&\leq 0,
\end{split}
\]
where $V$ is the coordinate vector of $v$ relative to the basis $\{v_1(t),...,v_n(t)\}$. 

Finally, if we assume $\frac{1}{4}\mathcal A^2+\nabla^2W\leq k^2 I$, then 
\[
\begin{split}
&\dot \lambda(t)+\lambda(t)^2-k^2\leq 0. 
\end{split}
\]
An argument using Gronwall's inequality shows that 
\[
\lambda(t)\leq k\coth(kt)
\]
and the result follows. 

\smallskip

\section{Proof of Theorem \ref{main2}}

In this section, we give the proof of Theorem \ref{main2}. 

Assume that the dimension of the manifold is given by $n=2N$. As before, $\{v_1(t),...,v_{2N}(t)\}$ denotes the parallel adapted frame along the curve $\varphi_t(x)$, where $\varphi_t$ is the flow of the vector field $Y_t=-2\nabla\log\rho_t-X$. Let $\mathcal J$ be the complex structure and let $J(t)$ be the matrix defined by $J_{ij}(t)=\left<\mathcal J(v_i(t)),v_j(t)\right>$. Assume that $\mathcal A=\mathcal J^*\mathcal A\mathcal J$. Since $S^{Sk}_{ij}(t)=-\frac{1}{2}\left<\mathcal A(v_i(t)),v_j(t)\right>$, it follows that 
\[
\begin{split}
&S^{Sk}_{ij}(t)=-\frac{1}{2}\left<\mathcal A(v_i(t)),v_j(t)\right>\\
&=-\frac{1}{2}\left<\mathcal A(\mathcal J (v_i(t))),\mathcal J(v_j(t))\right>=\sum_{k,l=1}^{2N}J_{ik}(t)J_{jl}(t)S^{Sk}_{kl}(t). 
\end{split}
\]

Since $\nabla\mathcal J=0$, it follows from (\ref{vdot}) that 
\[
\begin{split}
\dot J(t)=S^{Sk}(t)J(t)-J(t)S^{Sk}(t)=0. 
\end{split}
\]
Therefore, $J(t)$ is independent of time and we can assume that $v_{N+i}(t)=\mathcal Jv_i(t)$ for all $i=1,...,N$. It also follows from this and (\ref{Sy1}) that 
\begin{equation}\label{SSy}
\begin{split}
&\frac{d}{dt}( S^{Sy}(t)+JS^{Sy}(t)J^T)\\
&=-S^{Sy}(t)^2-JS^{Sy}(t)^2J^T+S^{Sk}(t)^2\\
&+JS^{Sk}(t)^2J^T+D(t)+JD(t)J^T+E(t)\\
&+JE(t)J^T +F(t)+JF(t)J^T-R(t)-JR(t)J^T. 
\end{split}
\end{equation}

Next, recall that the Riemann curvature tensor and the complex structure satisfy the following properties on a K\"ahler manifold:
\begin{equation}\label{P1}
\Rm(\mathcal JX_1,\mathcal J X_2) X_3=\Rm(X_1, X_2) X_3
\end{equation}
\begin{equation}\label{P2}
\Rm(X_1, X_2) \mathcal J X_3=\mathcal J\Rm(X_1, X_2) X_3
\end{equation}

\begin{lem}\label{Rm}
On a K\"ahler manifold, the following relation on the Riemann curvature tensor holds:
\[
\begin{split}
&\tr\left(Y\mapsto \nabla\Rm(X, X, Y)Y+\nabla\Rm(\mathcal JX, \mathcal JX, Y) Y \right)\\
&=\tr\left(Y\mapsto \nabla\Rm(Y,Y,X)X +\nabla\Rm(Y, Y, \mathcal JX) \mathcal JX\right). 
\end{split}
\]
\end{lem}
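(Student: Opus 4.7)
The plan is to reduce each side of the asserted identity to an expression in $\nabla\ric$, and then close it using two K\"ahler-specific facts: the $\mathcal{J}$-invariance of the Ricci tensor, and the closedness of the Ricci form. I read $\tr(Y\mapsto V(Y,Y))$ for a symmetric vector-valued quadratic form $V$ as $\sum_{i}V(e_i,e_i)$ over an orthonormal frame, so both sides of the lemma are vectors in $T_pM$ and the identity will be verified by pairing against an arbitrary $Y\in T_pM$.

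For the left-hand side, $\sum_i \Rm(X,e_i)e_i$ is the Ricci-vector $\ric(X,\cdot)^{\sharp}$, and tensorial differentiation gives $\sum_i(\nabla_X\Rm)(X,e_i)e_i=(\nabla_X\ric)(X,\cdot)^{\sharp}$. Together with the analogous identity for the $\mathcal{J}X$-summand, the LHS paired with $Y$ becomes $\nabla\ric(X,X,Y)+\nabla\ric(\mathcal{J}X,\mathcal{J}X,Y)$. For the right-hand side, the contracted second Bianchi identity $\nabla^{a}R_{abcd}=\nabla_c\ric_{bd}-\nabla_d\ric_{bc}$, applied to the first-slot divergence $\sum_i(\nabla_{e_i}\Rm)(e_i,X)X$ and its $\mathcal{J}X$-analogue, yields
\[
\langle\mathrm{RHS},Y\rangle=\nabla\ric(Y,X,X)-\nabla\ric(X,X,Y)+\nabla\ric(Y,\mathcal{J}X,\mathcal{J}X)-\nabla\ric(\mathcal{J}X,\mathcal{J}X,Y).
\]

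To close the gap I use two K\"ahler inputs. First, the properties (\ref{P1}), (\ref{P2}) combined with $\nabla\mathcal{J}=0$ force $\ric$ to be $\mathcal{J}$-invariant, so $\nabla\ric(Y,\mathcal{J}X,\mathcal{J}X)=\nabla\ric(Y,X,X)$. Second, the Ricci form $\rho(A,B):=\ric(\mathcal{J}A,B)$ is closed on a K\"ahler manifold, and the formula for $d\rho$ in terms of $\nabla$ gives the cyclic identity
\[
\nabla\ric(A,\mathcal{J}B,C)+\nabla\ric(B,\mathcal{J}C,A)+\nabla\ric(C,\mathcal{J}A,B)=0.
\]
Specializing $(A,B,C)=(\mathcal{J}X,X,Y)$, using $\mathcal{J}^{2}=-I$, the $\mathcal{J}$-invariance of $\ric$, and the symmetry of $\ric$, one obtains the key relation $\nabla\ric(\mathcal{J}X,\mathcal{J}X,Y)=\nabla\ric(Y,X,X)-\nabla\ric(X,X,Y)$. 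Substituting these identities into the displayed formulas, both $\langle\mathrm{LHS},Y\rangle$ and $\langle\mathrm{RHS},Y\rangle$ collapse to $\nabla\ric(Y,X,X)$, which—since $Y$ was arbitrary—completes the proof.

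The main obstacle is the careful sign bookkeeping through the contracted second Bianchi identity, together with the realization that the indispensable K\"ahler structural input is precisely the closedness of the Ricci form; once the cyclic $\nabla\ric$-identity above is in hand, the remainder of the verification is routine tensor algebra.
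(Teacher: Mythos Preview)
Your argument is correct and proceeds along a genuinely different route from the paper's. The paper proves the identity by a direct chain of equalities: starting from the left-hand side, it repeatedly applies the K\"ahler curvature identities (\ref{P1}) and (\ref{P2}) together with the first and (differential) second Bianchi identities to rewrite the sum until it becomes the right-hand side. No contraction to $\nabla\ric$ is performed, and the Ricci form never appears explicitly.

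Your approach instead contracts each side against an arbitrary vector $Y$ and reduces to statements about $\nabla\ric$: the left-hand side via the definition of Ricci, the right-hand side via the contracted second Bianchi identity. The K\"ahler input is then isolated as (i) $\mathcal J$-invariance of $\ric$ and (ii) $d\rho=0$ for the Ricci form $\rho=\ric(\mathcal J\cdot,\cdot)$, which yields the cyclic identity in $\nabla\ric$ that closes the computation. A pleasant by-product of your method is that it identifies the common value of both sides: paired with $Y$, each equals $\nabla\ric(Y,X,X)$. The paper's calculation is more hands-on and stays entirely at the level of the full curvature tensor; yours is more structural, pinpointing closedness of the Ricci form as the essential K\"ahler ingredient. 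Both arguments are of comparable length and ultimately rest on the same underlying identities, just organized differently.
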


\begin{proof}
Let $X_1,...,X_{2N}$ be an orthonormal frame. By (\ref{P1}), (\ref{P2}), and the Bianchi identities, 
\[
\begin{split}
&\sum_{i=1}^{2N}\nabla\Rm(X, X, \mathcal JX_i)\mathcal JX_i+\sum_{i=1}^{2N}\nabla\Rm(\mathcal JX, \mathcal JX, \mathcal JX_i) \mathcal JX_i\\
&=\sum_{i=1}^{2N}\nabla\Rm(X, X_i, \mathcal J X)\mathcal JX_i+\sum_{i=1}^{2N}\nabla\Rm(\mathcal JX, X, X_i)\mathcal JX_i\\
&=-\sum_{i=1}^{2N}\nabla\Rm(X_i, \mathcal J X, X)\mathcal JX_i\\
&=\sum_{i=1}^{2N}\nabla\Rm(X_i, \mathcal JX_i, \mathcal J X)X+\sum_{i=1}^{2N}\nabla\Rm(X_i, X, \mathcal JX_i)\mathcal JX\\
&=\sum_{i=1}^{2N}\nabla\Rm(X_i, X_i, X)X+\sum_{i=1}^{2N}\nabla\Rm(X_i, X_i, \mathcal J X)\mathcal JX
\end{split}
\]
\end{proof}

Let $v$ be a tangent vector in $T_{\varphi_t(x)}M$. We extend $v$ to a vector field, still denoted by $v$, defined in a neighborhood of $\varphi_t(x)$ by parallel translation along geodesics. By (\ref{M}), (\ref{P1}), and (\ref{P2}), 
\[
\begin{split}
&\Delta\left<\nabla_{\mathcal J v}Y_t,\mathcal J v\right>=\left<\nabla^2(\di Y_t)(\mathcal Jv),\mathcal Jv\right>-\sum_{i=1}^{2N}\left<\nabla^2\mathcal A(\mathcal Jv,v_i(t),v_i(t)),\mathcal J v\right>\\
&+\sum_{i=1}^{2N}\Big(\left<\mathcal J^*\mathcal M\mathcal J v,\Rm(v, v_i(t))v_i(t)\right>+\left<Y_t,\nabla\Rm(\mathcal Jv, \mathcal Jv, v_i(t))v_i(t))\right>\\
&-\left<\mathcal J^*\mathcal M\mathcal Jv_i(t),\Rm(v_i(t),v)v\right>-\left<Y_t,\nabla\Rm(v_i(t),v_i(t),\mathcal J v)\mathcal Jv)\right>\Big). 
\end{split}
\]

Therefore, by (\ref{M}) and Lemma \ref{Rm},  
\[
\begin{split}
&\Delta(\left<\nabla_vY_t,v\right>+\left<\nabla_{\mathcal J v}Y_t,\mathcal J v\right>)=\left<\nabla^2(\di Y_t)(v),v\right>+\left<\nabla^2(\di Y_t)(\mathcal Jv),\mathcal Jv\right>\\
&-\sum_{i=1}^{2N}(\left<\nabla^2\mathcal A(v,v_i(t),v_i(t)), v\right>+\left<\nabla^2\mathcal A(\mathcal Jv,v_i(t),v_i(t)),\mathcal J v\right>)\\
&+\sum_{i=1}^{2N}\Big(\left<(\mathcal M_t+\mathcal J^*\mathcal M_t\mathcal J) v,\Rm(v, v_i(t))v_i(t)\right>\\
&-\left<(\mathcal M_t+\mathcal J^*\mathcal M_t\mathcal J)v_i(t),\Rm(v_i(t),v)v\right>\Big). 
\end{split}
\]

Let us fix a time $t$ and let $\lambda_1\leq...\leq \lambda_N$ be eigenvalues of $\mathcal M(\varphi_t(x))+\mathcal J^*\mathcal M\mathcal J(\varphi_t(x))$ with eigenvectors $v_1(t),...,v_N(t)$, respectively. Note that $\mathcal Jv_i(t)$ is an eigenvector of the same operator with eigenvalue $\lambda_i$. Let $\lambda(t)=\sup_{\{v\in TM||v|=1\}}(\left<\nabla Y_t(v),v\right>+\left<\nabla Y_t(\mathcal Jv),\mathcal Jv\right>)$. Then $\lambda(t)=\lambda_N$. We also assume that $v(\varphi_t(x))$ be a unit tangent vector which achieves this supremum. It follows that 
\[
\begin{split}
&0\geq \left<\nabla^2(\di Y_t)(v),v\right>+\left<\nabla^2(\di Y_t)(\mathcal Jv),\mathcal Jv\right>\\
&-\sum_{i=1}^{2N}(\left<\nabla^2\mathcal A(v,v_i(t),v_i(t)), v\right>+\left<\nabla^2\mathcal A(\mathcal Jv,v_i(t),v_i(t)),\mathcal J v\right>)\\
&+\sum_{i=1}^{N}(\lambda-\lambda_i)\Big(\left<\Rm(v, v_i(t))v_i(t),v\right>+\left<\Rm(v, \mathcal Jv_i(t))\mathcal Jv_i(t),v\right>)\Big). 
\end{split}
\]

Assume that the bisectional curvature is non-negative. Then we obtain 
\[
\begin{split}
&0\geq \left<\nabla^2(\di Y_t)(v),v\right>+\left<\nabla^2(\di Y_t)(\mathcal Jv),\mathcal Jv\right>\\
&-\sum_{i=1}^{2N}(\left<\nabla^2\mathcal A(v,v_i(t),v_i(t)), v\right>+\left<\nabla^2\mathcal A(\mathcal Jv,v_i(t),v_i(t)),\mathcal J v\right>). 
\end{split}
\]

If we assume that $\nabla\mathcal A(X,X)+\nabla\mathcal A(\mathcal JX,\mathcal JX)=0$ for all tangent vectors $X$, then 
\[
\begin{split}
&0\geq \left<\nabla^2(\di Y_t)(v),v\right>+\left<\nabla^2(\di Y_t)(\mathcal Jv),\mathcal Jv\right>. 
\end{split}
\]
If we let $V$ be the coordinate vector of $v$ with respect to $\{v_1(t),...,v_{2N}(t)\}$, then (\ref{SSy}) becomes 
\[
\begin{split}
&\dot\lambda(t)\leq-\left<S^{Sy}(t)^2V,V\right>-\left<JS^{Sy}(t)^2J^TV,V\right>+\frac{1}{4}\left<\mathcal A^2(v),v\right>\\
&+\frac{1}{4}\left<\mathcal A^2(\mathcal Jv),\mathcal Jv\right>+\left<\nabla^2W(v),v\right>+\left<\nabla^2W(\mathcal Jv),\mathcal Jv\right>\\
&\leq-2a(t)\lambda(t)+2a(t)^2+\frac{1}{2}\left<\mathcal A^2(v),v\right>+\left<\nabla^2W(v),v\right>+\left<\nabla^2W(\mathcal Jv),\mathcal Jv\right>. 
\end{split}
\]

Therefore, if $\frac{1}{2}\mathcal A^2+\nabla^2W+\mathcal J^*\nabla^2W \mathcal J\leq 2k^2I$, then 
\[
\begin{split}
&\dot\lambda(t)\leq-2a(t)\lambda(t)+2a(t)^2+2k^2, 
\end{split}
\]
where $a(t)=k\coth(kt)$. It follows that $\lambda(t)\leq 2k\coth(kt)$. 

\smallskip

\section{Proof of Theorem \ref{mainscalar}}

In this section, we give a proof of Theorem \ref{mainscalar}. We use the same notations here as that of Theorem \ref{main}. 

Let $g_t=a(t)\di(Y_t)+\frac{b(t)}{2}|Y_t|^2-b(t)W$. We will apply maximum principle on $g_t$. The functional parameters $a(t)$ and $b(t)$ are given by 
\[
\begin{split}
&a(t)= \sinh^2(\chi t)+N\left(\frac{\sinh(\chi t)\cosh(\chi t)}{\chi}-t\right),\\
&b(t)=-N\left(\frac{\sinh(\chi t)\cosh(\chi t)}{\chi}-t\right),
\end{split}
\]
where $N=K+\lambda$. 

First, we take the trace in (\ref{Sy1}), use $\ric(X,X)\geq -K|X|^2$, and obtain 
\[
\begin{split}
&0=\dot s(t)+|S^{Sy}(t)|^2+|S^{Sk}(t)|^2-\left<\di\mathcal A,Y_t\right>_{\varphi_t}-\Delta W(\varphi_t)+R(t)\\
&\geq\dot s(t)+\frac{2c(t)}{n}s(t)-\frac{1}{n}c(t)^2+\frac{1}{4}|\mathcal A|^2_{\varphi_t}\\
&-\left<\di\mathcal A,Y_t\right>_{\varphi_t}-\Delta (\di Y_t)(\varphi_t)-\Delta W(\varphi_t)-K|Y_t|^2_{\varphi_t}, 
\end{split}
\]
where $s(t)=\di(Y_t)(\varphi_t)$. 

It follows from this, (\ref{Y}), $a(t)+b(t)\geq 0$, and 
\[
\di(\dot Y_t)=-2\Delta\left(\frac{d}{dt}\log\rho_t\right)=\Delta(\di Y_t)+\Delta W-\frac{1}{2}\Delta |Y_t|^2
\]
that $g_t$ satisfies 
\[
\begin{split}
&\frac{d}{dt}g_t(\varphi_t)=\dot a(t)\di(Y_t)(\varphi_t)+a(t)\frac{d}{dt}\di(Y_t)(\varphi_t)+\frac{\dot b(t)}{2}|Y_t|^2_{\varphi_t}\\
&+b(t)\left<\dot Y_t+\nabla_{Y_t}Y_t,Y_t\right>_{\varphi_t}-\dot b(t)W(\varphi_t)-b(t)\left<\nabla W,Y_t\right>_{\varphi_t}\\
&=\dot a(t)\di(Y_t)(\varphi_t)+(a(t)+b(t))\frac{d}{dt}\di(Y_t)(\varphi_t)+\frac{\dot b(t)}{2}|Y_t|^2_{\varphi_t}\\
&-b(t)\di(\dot Y_t)(\varphi_t)-b(t)\left<\mathcal A(Y_t),Y_t\right>_{\varphi_t}-\dot b(t)W(\varphi_t)\\
&\leq \left(\dot a(t)-\frac{2c(t)(a(t)+b(t))}{n}\right)\di(Y_t)(\varphi_t)+\left(\frac{1}{n}c(t)^2-\frac{1}{4}|\mathcal A|^2_{\varphi_t}\right)(a(t)+b(t))\\
&+(\left<\di\mathcal A,Y_t\right>_{\varphi_t}+\Delta (\di Y_t)(\varphi_t)+\Delta W(\varphi_t)+K|Y_t|^2_{\varphi_t})(a(t)+b(t))\\
&+\frac{\dot b(t)}{2}|Y_t|^2_{\varphi_t}-b(t)\di(\dot Y_t)(\varphi_t)-b(t)\left<\mathcal A(Y_t),Y_t\right>_{\varphi_t}-\dot b(t)W(\varphi_t)\\
\end{split}
\]
\[
\begin{split}
&=\left(\dot a(t)-\frac{2c(t)(a(t)+b(t))}{n}\right)\di(Y_t)(\varphi_t)+\Delta g(\varphi_t)+\frac{\dot b(t)}{2}|Y_t|^2_{\varphi_t}-\dot b(t)W(\varphi_t)\\
&+\left(\left<\di\mathcal A,Y_t\right>_{\varphi_t}+K|Y_t|^2_{\varphi_t}+\frac{1}{n}c(t)^2-\frac{1}{4}|\mathcal A|^2_{\varphi_t}+\Delta W(\varphi_t)\right)(a(t)+b(t))\\
&\leq \left(\dot a(t)-\frac{2c(t)(a(t)+b(t))}{n}\right)\di(Y_t)(\varphi_t)+\Delta g(\varphi_t)+\frac{\dot b(t)}{2}|Y_t|^2_{\varphi_t}-\dot b(t)W(\varphi_t)\\
&+\left(\frac{1}{4\lambda}|\di\mathcal A|^2_{\varphi_t}+\left(\lambda+K\right)|Y_t|^2_{\varphi_t}+\frac{1}{n}c(t)^2-\frac{1}{4}|\mathcal A|^2_{\varphi_t}+\Delta W(\varphi_t)\right)(a(t)+b(t))\\
\end{split}
\]

Let $c(t)$ be the function defined by $\dot a(t)-\frac{2c(t)(a(t)+b(t))}{n}=0$. Since $\dot b(t)+2(K+\lambda)(a(t)+b(t))=0$, we also have  
\[
\begin{split}
&\frac{d}{dt}g_t(\varphi_t)\leq \Delta g(\varphi_t)-\dot b(t)W(\varphi_t)\\
&+\left(\frac{1}{4\lambda}|\di\mathcal A|^2_{\varphi_t}+\frac{1}{n}c(t)^2-\frac{1}{4}|\mathcal A|^2_{\varphi_t}+\Delta W(\varphi_t)\right)(a(t)+b(t)). 
\end{split}
\]

Since $\frac{1}{4\lambda}|\di\mathcal A|^2-\frac{1}{4}|\mathcal A|^2+\Delta W\leq k_1$, $\dot b\leq 0$, and $W\geq -k_2$, it follows that  
\[
\begin{split}
\dot g_t+\left<\nabla g_t,Z_t\right>&\leq\left(\frac{c(t)^2}{n}+k_1\right)(a(t)+b(t))+\Delta g_t-\dot b(t)k_2\\
&=\left(\frac{c(t)^2}{n}+k_1+2(K+\lambda)k_2\right)(a(t)+b(t))+\Delta g_t.
\end{split}
\]

By maximum principle, we obtain 
\[
g_t\leq \int_0^t\left(\frac{c(\tau)^2}{n}+k_1+2(K+\lambda)k_2\right)(a(\tau)+b(\tau))d\tau
\]
and the result follows. 

\smallskip

\end{document}